\documentclass[12pt, a4paper]{amsart}
\usepackage{amsmath}
\usepackage{geometry,amsthm,graphics,tabularx,amssymb,shapepar}
\usepackage{amscd}
\usepackage[all,2cell,dvips]{xy}

\newcommand{\End}{{\mathrm{End}}}

\newcommand{\GL}{{\mathrm{GL}}}

\newcommand{\Ind}{{\mathrm{Ind}}}

\newcommand{\SL}{{\mathrm{SL}}}

\newcommand{\SO}{{\mathrm{SO}}}

\newcommand{\vsp}{{\vspace{0.2in}}}

\newcommand{\Tr}{\operatorname{Tr}}

\newcommand{\diag}{\operatorname{diag}}

\newcommand{\od}{\operatorname{d}}

\newcommand{\oO}{\operatorname{O}}

\newcommand{\g}{\mathfrak g}

\renewcommand{\k}{\mathfrak k}

\newcommand{\p}{\mathfrak p}

\renewcommand{\t}{\mathfrak t}

\newcommand{\gl}{\mathfrak g \mathfrak l}

\newcommand{\Z}{\mathbb{Z}}
\newcommand{\C}{\mathbb{C}}
\newcommand{\R}{\mathbb R}

\renewcommand{\S}{\mathbf S}

\newcommand{\F}{\mathbf{F}}

\newcommand{\la}{\langle}
\newcommand{\ra}{\rangle}

\newcommand{\be}{\begin {equation}}
\newcommand{\ee}{\end {equation}}
\newcommand{\bee}{\begin {equation*}}
\newcommand{\eee}{\end {equation*}}

\theoremstyle{Theorem}

\newtheorem{thm}{Theorem}[section]

\theoremstyle{Theorem}
\newtheorem{lem}{Lemma}[section]

\theoremstyle{Theorem}
\newtheorem{prp}{Proposition}[section]

\theoremstyle{Plain}

\theoremstyle{Definition}

\begin{document}

\title[positivity of principal matrix coefficients]{Positivity of principal matrix coefficients of principal series representations of $\GL_n(\R)$}

\author{Yangyang Chen}
\address{Academy of Mathematics and Systems Science\\
Chinese Academy of Sciences\\
Beijing, 100190, P.R. China}
\email{chenyy@amss.ac.cn}

\subjclass[2010]{Primary 22E45, Secondary 22E15}
\keywords{general linear group, Iwasawa decomposition, matrix coefficient, principal series
representation}


\begin{abstract}
Let $G=\GL_n(\R)$, with the usual Cartan decomposition $G=K\exp(\p_0)$ and the usual Iwasawa decomposition $G=NAK$. We determine the image of $\exp(\p_0)$ under the projection of $G$ to $K$ through the Iwasawa decomposition. As an application, we prove a positivity result
about the matrix coefficients of principal series representations of $G$.
\end{abstract}

\maketitle

\section{Introduction}
\label{sec:intro}

 Let $G=\GL_n(\R)$ ($n\geq 1$). It has an Iwasawa decomposition
 \[
   G=NAK,
 \]
 where $N$ is the subgroup of $G$ of upper triangular unipotent matrices, $A$ is the subgroup of diagonal matrices with positive entries and $K$ is the group of orthogonal matrices in $G$.

In this paper, we will prove some positivity results about the matrix coefficients of principal series representations of $G$. To be precise, let $M$ be the subgroup of $G$ consists of all diagonal matrices in $G$ with diagonal entries $\pm 1$.  Let $\delta: M\rightarrow \C^\times$ and $\nu: A\rightarrow \C^\times $ be two characters. Define
\[
  I_{\delta\otimes \nu}:=\Ind_{MAN}^G (\delta\otimes \nu) \quad(\textrm{normalized smooth induction}),
\]
which is called a principal series representation of $G$. It is well known hat $I_{\delta\otimes \nu}$ has a unique lowest $K$-type $\alpha$, and $\alpha$ occurs with multiplicity one in $I_{\delta\otimes \nu}$ (see \cite[Theorem 4.9]{Vo1}). By duality, $\alpha^\vee$ is the unique lowest $K$-type of $I_{\delta\otimes \nu}^\vee$, which occurs with multiplicity one. Here and henceforth, ``$\,^\vee$'' indicates the contragradient representation. As is quite often, we do not distinguish a representation and its underlying space. View $\alpha$ as a subspace of $I_{\delta\otimes \nu}$, and $\alpha^\vee$ as a subspace of $I_{\delta\otimes \nu}^\vee$ (we assume the injection is compatible with the natural pairings, see \cite[Section 3]{TH}). We define the principal matrix coefficient of  $I_{\delta\otimes \nu}$ to be the function
\[
  g\mapsto \psi_{\delta\otimes \nu}(g):=\sum_{i=1}^r \la g. u_i, v_i\ra
\]
on $G$, where $\{u_i\}_{1\leq i\leq r}$ is a basis of $\alpha$, and $\{v_i\}_{1\leq i\leq r}$ is the dual basis of $\alpha^\vee$, and $\la\,,\,\ra$ stands for the natural paring between $I_{\delta\otimes \nu}$ and $I_{\delta\otimes \nu}^\vee$. This principal matrix coefficient does not depend on the choice of the basis $\{u_i\}_{1\leq i\leq r}$. Actually, it follows easily that
\[
\psi_{\delta\otimes \nu}=\Tr\circ\phi_{I_{\delta\otimes \nu},\alpha},
\]
where $\phi_{I_{\delta\otimes\nu},\alpha}: G\rightarrow \End_{\C}(\alpha)$ is the matrix coefficient of the representation $I_{\delta\otimes\nu}$ with respect to the $K$-type $\alpha$, which will be defined in the next section.

Now we can state the main result of this paper.

\begin{thm}
\label{thm2}
With the notations as above, assume that $I_{\delta\otimes \nu}$ has real infinitesimal character, or equivalently, the image of $\nu$ is contained in the set of positive real numbers. Then
\[
  \psi_{\delta\otimes \nu}(g)>0
\]
for all $g\in \exp(\p_0)$.
\end{thm}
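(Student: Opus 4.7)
The plan is to realize $I_{\delta\otimes\nu}$ in the compact picture as the space of $f\colon K\to\C$ satisfying $f(mk)=\delta(m)f(k)$, on which $G$ acts by $(\pi(g)f)(k)=a(kg)^{\nu+\rho}f(\kappa(kg))$, and to compute $\psi_{\delta\otimes\nu}$ as an integral over $K$. After choosing an orthonormal basis $\{u_i\}$ of the lowest $K$-type $\alpha\subset\Ind_M^K\delta$ inside $L^2(M\backslash K)$ and using $\delta^2=1$ to identify the dual basis of $\alpha^\vee$ with $\{v_i\}=\{\overline{u_i}\}$, a direct calculation yields
\[
\psi_{\delta\otimes\nu}(g)=\int_K a(kg)^{\nu+\rho}\,\Xi\bigl(\kappa(kg),k\bigr)\,dk,\qquad \Xi(k_1,k_2):=\sum_{i} u_i(k_1)\overline{u_i(k_2)}.
\]
Since $\alpha$ is $K$-invariant, the reproducing kernel $\Xi$ satisfies $\Xi(k_1 k,k_2 k)=\Xi(k_1,k_2)$, so $\Xi(k_1,k_2)=\chi(k_1k_2^{-1})$ for a single continuous function $\chi$ on $K$. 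Concretely $\chi(k)$ is a positive multiple of the matrix coefficient $\langle\pi_\alpha(k)e,e\rangle$, where $e\in\alpha$ is the essentially unique nonzero vector of $M$-isotype $\delta$; its existence and uniqueness follow from Frobenius reciprocity together with the multiplicity-one property of $\alpha$ in $I_{\delta\otimes\nu}$.

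The decisive reduction is a $K$-conjugation symmetry that rewrites the integrand purely in terms of elements of $\exp(\p_0)$. For any $k\in K$ and $g\in\exp(\p_0)$, the element $h:=kgk^{-1}$ again belongs to $\exp(\p_0)$ because $K$ preserves $\p_0$ under the adjoint action, and uniqueness of the Iwasawa decomposition applied to
\[
kgk^{-1}=n(kg)\,a(kg)\,\bigl(\kappa(kg)k^{-1}\bigr)
\]
forces $a(h)=a(kg)$ and $\kappa(h)=\kappa(kg)k^{-1}$. Hence the integrand above equals $a(h)^{\nu+\rho}\chi(\kappa(h))$ with $h\in\exp(\p_0)$ for every $k\in K$. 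The hypothesis that $\nu$ takes positive real values makes $a(h)^{\nu+\rho}>0$, so the theorem is reduced to the single statement
\[
\chi(\kappa(h))>0 \quad\text{for every } h\in\exp(\p_0).
\]

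For this last assertion I would invoke the paper's main Iwasawa-image lemma, which identifies $\kappa(\exp(\p_0))$ as an explicit subset $K^+\subset K$; since $\det g>0$ on $\exp(\p_0)$ one already has $K^+\subset\SO(n)$, and the characterization is expected to take the form of positivity of certain principal minors. This must be combined with an explicit description of $\alpha$ from Vogan's classification of lowest $K$-types. For the sign character $\delta=\delta_S$ indexed by a subset $S\subseteq\{1,\dots,n\}$, one expects $\alpha$ to be realized on $\Lambda^{|S|}\C^n$ with $\delta_S$-weight vector $e_{i_1}\wedge\cdots\wedge e_{i_{|S|}}$, so that $\chi(k)$ becomes, up to a positive scalar, the principal minor $\det(k_{S,S})$. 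After a Weyl-group permutation reducing to $S=\{1,\dots,|S|\}$, the explicit description of $K^+$ supplies positivity of $\det(\kappa(h)_{[1,|S|],[1,|S|]})$ and completes the argument.

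The real obstacle is precisely this last matching. The reduction in the first two paragraphs is a formal consequence of the compact picture together with the uniqueness of Iwasawa; the substantive work is in pinning down both sides explicitly---the geometric set $K^+=\kappa(\exp(\p_0))$ on one side and the representation-theoretic spherical function $\chi$ on the other---and verifying, for every character $\delta$, that the minor-positivity cutting out $K^+$ is compatible with the minor-positivity demanded by $\chi$.
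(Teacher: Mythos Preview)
Your proposal is correct and follows essentially the same route as the paper: express $\psi_{\delta\otimes\nu}$ as an integral over $K$ (your compact-picture formula is Harish-Chandra's Eisenstein integral in different clothing), use the identity $\kappa(kg)k^{-1}=\kappa(kgk^{-1})$ to reduce to positivity of $\chi(\kappa(h))$ for $h\in\exp(\p_0)$, identify $\chi$ with a principal minor via the explicit lowest $K$-type, and conclude from the characterization of $\kappa(\exp(\p_0))$ by positivity of principal minors. Two small remarks: your Weyl-group reduction to $S=\{1,\dots,|S|\}$ must be justified \emph{before} the integral computation (via the fact that $I_{\delta\otimes\nu}$ and $I_{\sigma.\delta\otimes\sigma.\nu}$ have the same composition series, hence equal matrix coefficients), not by permuting coordinates inside $\kappa(h)$; and your uniform claim $\alpha\cong\Lambda^{|S|}\C^n$ agrees with the paper's two-case description because $\Lambda^{r}\C^n\cong\det\otimes\Lambda^{n-r}\C^n$ as $\oO(n)$-modules via the Hodge star.
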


Recently, Sun \cite{S3} proves the non-vanishing hypothesis at infinity for Rankin-Selberg convolutions, a long awaited problem which appears in the arithmetic study of special values of L-functions. A key ingredient in his proof is \cite[Theorem 1.5]{S2}, which asserts the positivity of some matrix coefficients associated to the lowest $K$-type of an irreducible unitary representation with nonzero cohomology. Our results in this paper can serve as a weak analogue for principal series representations in the $\GL_n(\R)$ case. Nevertheless, this weak version is often enough for applications to branching problems, see \cite{Li,HL} for examples.

This paper is arranged as follows: Firstly, we recall briefly the definition of matrix coefficients and some basic facts of it in the next section. Then we will prove an elementary result in Section 3, which is vital to the proof of Theorem \ref{thm2}. In Section 4, we will analyze the lowest $K$-types of the principal series representations. Particularly, we find that the lowest $K$-types decomposes nicely under the action of $M$. Finally, with these preparations in hand, we will complete the proof of Theorem \ref{thm2} in the last section.\\

 \noindent{\bf Acknowledgements}: The author would like to thank Binyong Sun for initiating this work and for insightful comments on the proof of the main result.

\vsp
\section{Notion of matrix coefficients}
\label{sec:matcoe}

In this section, we will recall briefly the notion of matrix coefficients. We will follow Sun \cite{S2} with minor modifications. The principal series representations of $\GL_{n}(\R)$ have nice properties, namely, they belong to the category of the so called ``Casselman-Wallach representations". We will define the matrix coefficients for these representations.

In this section only, we let $G$ be a general real reductive Lie group. Fix a maximal compact subgroup $K$, and denote by $\g$ the complexified Lie algebra of $G$. By a representation of $G$, we mean a continuous linear action of $G$ on a complete, locally convex, Hausdorff, complex topological vector space. A representation $V$ is called a Casselman-Wallach representation if the space $V$ is Fr\'{e}chet, and the representation is smooth and of moderate growth, and Harish-Chandra. For more details about these representations, we refer the reader to \cite{Ca} and \cite{Wa}. Let $\pi$ be a Casselman-Wallach representation, $\alpha$ a $K$-type in $\pi$ with multiplicity one, we define the matrix coefficient of type $\alpha$ by
\[
\phi_{\pi,\alpha}(g):= p_{\alpha}\circ\pi(g)\circ j_{\alpha}, \quad \text{$g\in G$},
\]
where $j_{\alpha}$ is the $K$-equivalent embedding of $\alpha$ into $\pi$, and $p_{\alpha}$ is the continuous $K$-equivalent linear map from $\pi$ onto $\alpha$ such that
\[
p_{\alpha}\circ j_{\alpha}=\text{identity map on $\alpha$}.
\]
The matrix coefficient $\phi_{\pi,\alpha}$ is a real analytic function on $G$ with values in $\End_{\C}(\alpha)$. It is independent of the choice of $j_{\alpha}$ and is determined by the isomorphic class of $\pi$.

A $(\g, K)$-module is called a Harish-Chandra module if it is of finite length. By definition, the underlying $(\g, K)$-module of a Casselman-Wallach representation is necessarily a Harish-Chandra module. The Casselman-Wallach globalization theorem(\cite{Ca}, \cite{Wa} and \cite{BK}) essentially asserts that the category of Casselman-Wallach representations is equivalent to the category of Harish-Chandra modules. Thus the matrix coefficient $\phi_{\pi,\alpha}$ defined above is actually determined by the underlying $(\g, K)$-module of $\pi$. One can define the matrix coefficient $\phi_{V,\alpha}$ for a Harish-Chandra module $V$ by taking globalization as in \cite{S2}. For more details about globalization, we refer the reader to \cite{BK} and \cite[Chapter 11]{Wa}.

A general problem about matrix coefficient is to find a formula similar to Harish-Chandra's integral formula, for any irreducible admissible representation with respect to any lowest $K$-type of it.
Sun in \cite{S1} gives an integral formula for $\pi$ a cohomologically induced representation and $\alpha$ in the bottom layer of $\pi$, and in that same paper he explained why his formula is enough for the general problem. However, we will only consider principal series representations in this paper, and Harish-Chandra's Eisenstein integral formula will be enough. We will recall it later in this paper and use it to derive our Theorem \ref{thm2}.

Finally, we recall a general fact \cite[Facts 2.1]{S2} about matrix coefficients which will be used in our proof of Theorem \ref{thm2}.
\begin{prp}
\label{reduc}
Let $V$ be a Harish-Chandra module, $\alpha$ a $K$-type occurring in $V$ with multiplicity one, then
\[
\phi_{V,\alpha}=\phi_{V_{1},\alpha}
\]
where $V_{1}$ is the irreducible sub-quotient of $V$ containing the $K$-type $\alpha$.
\end{prp}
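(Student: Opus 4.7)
The plan is to use the Jordan--Hölder filtration of $V$ and exactness of the $K$-isotypic component functor to show that both the embedding $j_\alpha$ and the projection $p_\alpha$ actually ``live on'' the piece of the filtration that realizes $V_1$. Since $V$ is a Harish--Chandra module, it has finite length, so fix a composition series
\[
 0 = V^{(0)} \subset V^{(1)} \subset \cdots \subset V^{(n)} = V
\]
with irreducible subquotients $W_i := V^{(i)}/V^{(i-1)}$. Because $\alpha$ has multiplicity one in $V$, it appears in exactly one $W_i$; relabeling if necessary, assume this is $W_{i_0}$, so $V_1 \cong W_{i_0}$. Passing to Casselman--Wallach globalizations, the filtration becomes a filtration of $\pi$ by closed $G$-invariant subspaces $\pi^{(0)} \subset \pi^{(1)} \subset \cdots \subset \pi^{(n)} = \pi$ with $\pi^{(i)}/\pi^{(i-1)}$ the globalization of $W_i$.

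Next I would establish two factorization claims. First, the image of $j_\alpha$ lies in $\pi^{(i_0)}$: the $\alpha$-isotypic component functor is exact on $K$-finite modules, and none of the $W_j$ for $j > i_0$ contains $\alpha$, so the $\alpha$-isotypic component of $V$ equals that of $V^{(i_0)}$ (a single copy of $\alpha$), and hence the same holds on the level of smooth vectors. Second, the projection $p_\alpha$ vanishes on $\pi^{(i_0-1)}$: none of the $W_j$ for $j < i_0$ contains $\alpha$, so $V^{(i_0-1)}$ has trivial $\alpha$-isotypic component, and therefore so does $\pi^{(i_0-1)}$, on which any $K$-equivariant map into $\alpha$ must vanish.

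Combining these, for every $g \in G$ the composite
\[
 \phi_{V,\alpha}(g) = p_\alpha \circ \pi(g) \circ j_\alpha
\]
sends $\alpha$ into $\pi^{(i_0)}$ (by the first claim), stays in $\pi^{(i_0)}$ (since it is $G$-invariant), and then descends through $\pi^{(i_0)} \twoheadrightarrow \pi^{(i_0)}/\pi^{(i_0-1)}$ (by the second claim). Hence $\phi_{V,\alpha}(g) = \bar{p}_\alpha \circ \bar{\pi}(g) \circ \bar{j}_\alpha$, where $\bar{\pi}$ is the globalization of $V_1$ and $\bar{j}_\alpha$, $\bar{p}_\alpha$ are the induced $K$-equivariant embedding and projection. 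The identity $\bar p_\alpha \circ \bar j_\alpha = \mathrm{id}_\alpha$ is inherited from $p_\alpha \circ j_\alpha = \mathrm{id}_\alpha$, so by uniqueness (from multiplicity one of $\alpha$ in $V_1$) this is exactly the pair used in the definition of $\phi_{V_1,\alpha}$. Therefore $\phi_{V,\alpha} = \phi_{V_1,\alpha}$.

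The main point requiring care is the transfer from Harish--Chandra modules to their Casselman--Wallach globalizations: one must invoke that the globalization functor is exact and sends submodules to closed $G$-invariant subspaces (both part of the Casselman--Wallach theorem cited as \cite{Ca}, \cite{Wa}, \cite{BK}), and that the $\alpha$-isotypic component of the globalization agrees with that of the underlying $(\g,K)$-module. Apart from this bookkeeping, the argument is purely formal and uses nothing beyond exactness of the $K$-isotypic functor and the multiplicity-one hypothesis.
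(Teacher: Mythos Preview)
The paper does not actually prove this proposition: it is stated as a quoted fact from \cite[Facts 2.1]{S2} and no argument is given. So there is no ``paper's own proof'' to compare against here.

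Your proof is correct and is essentially the standard argument one would give for this fact. The two factorization claims are the heart of the matter, and both follow as you say from exactness of the $\alpha$-isotypic functor together with the multiplicity-one hypothesis. One small point worth making explicit: in step two you assert that $p_\alpha$ vanishes on $\pi^{(i_0-1)}$, but $p_\alpha$ is a continuous map on the full Fr\'echet space, not just on $K$-finite vectors. The cleanest way to justify this is to note that $p_\alpha$ vanishes on the dense subspace $V^{(i_0-1)}$ of $K$-finite vectors (since that space has trivial $\alpha$-isotypic component), and then invoke continuity. Alternatively, one can observe that the $\alpha$-isotypic component of a smooth admissible representation already consists of $K$-finite vectors, so it coincides with the $\alpha$-isotypic component of the underlying Harish--Chandra module; either route closes the gap. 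With that caveat, your bookkeeping on the globalization side (exactness, closed subspaces) is exactly what is needed and is correctly attributed to the Casselman--Wallach theorem.
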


\vsp
\section{Image of the split part in $K$}
\label{sec:image}

With the notations as in the introduction, the group $G$ also has a Cartan decomposition
 \[
   G=K\exp(\p_0),
 \]
 where $\p_0$ is the space of symmetric matrices in $\gl_n(\R)$. Note that $\exp(\p_0)$ equals to the set of positive definite symmetric matrices in $G$. We call it the split part of $G$. Let $\kappa:G\rightarrow K$ be the projection of $G$ to $K$ through the Iwasawa decomposition $G=NAK$. In this section, we will determine the image of the split part in $K$ under the projection $\kappa$.

 Let $M(n,\R)$ denote the set of square matrices of order $n$. For $B=(B_{i,j})\in M(n,\R)$, we denote by $B_{r}=det((B_{i,j})_{1\leq i,j\leq r})$ and $B_{r}^{'}=det((B_{i,j})_{n-r+1\leq i,j\leq n})$ for $1\leq r\leq n$.
Firstly, we need an simple fact from linear algebra.

\begin{lem}
\label{sym}
For each $B=(B_{i,j})\in M(n,\R)$, satisfying $B_{r}>0$ for $1\leq r\leq n$, there exists an upper-triangular matrix $C$ with each diagonal entry equals one, such that $BC$ is a symmetric matrix.
\end{lem}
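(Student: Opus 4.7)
My plan is to proceed by induction on $n$, peeling off the last row and column of $B$ at each step. The base case $n=1$ is trivial since $C=(1)$ makes $BC=B$ a $1\times 1$ ``symmetric'' scalar.

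For the inductive step, I would write
\[
B=\begin{pmatrix} B' & b \\ c^T & d \end{pmatrix},
\]
where $B'$ is $(n-1)\times(n-1)$, the vectors $b,c\in\R^{n-1}$ form the last column and last row of $B$ apart from the corner entry $d=B_{n,n}$. Since the leading principal minors $B_1,\dots,B_{n-1}$ of $B$ coincide with those of $B'$, the inductive hypothesis applies to $B'$ and produces an upper-triangular matrix $C'$ with unit diagonal such that $B'C'$ is symmetric. I then look for $C$ in the block form
\[
C=\begin{pmatrix} C' & v \\ 0 & 1 \end{pmatrix}
\]
for some column vector $v\in\R^{n-1}$ still to be chosen; note that any such $C$ is automatically upper-triangular with unit diagonal.

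The direct computation
\[
BC=\begin{pmatrix} B'C' & B'v+b \\ c^TC' & c^Tv+d \end{pmatrix}
\]
shows that $BC$ is symmetric if and only if the top-right block equals the transpose of the bottom-left block, i.e.\ $B'v+b=(C')^Tc$. This is a single linear equation $B'v=(C')^Tc-b$ for the unknown $v$. Because $\det(B')=B_{n-1}>0$ by hypothesis, $B'$ is invertible, so $v=(B')^{-1}\bigl((C')^Tc-b\bigr)$ solves the equation uniquely. This completes the induction.

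There is essentially no obstacle here: the argument is a routine block-triangular computation, and the only thing one must verify is that the chain of leading principal minor conditions transfers correctly to the smaller block $B'$, which is immediate. The positivity hypothesis $B_r>0$ for $r\le n-1$ is used only to guarantee invertibility of the relevant block at each inductive stage; the final condition $B_n>0$ is not actually needed for this lemma, but is presumably retained in the statement for later use.
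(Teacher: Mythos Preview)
Your proof is correct and follows exactly the approach indicated in the paper, which states only that the lemma ``can be proved by induction on the order $n$ and the technique of matrix blocking'' and omits the details. Your observation that the hypothesis $B_n>0$ is not actually needed here is also accurate.
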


\begin{proof}
This lemma can be proved by induction on the order $n$ and the technique of matrix blocking. We omit the details.
\end{proof}

\begin{prp}
\label{mina}
With notations as above, the following three sets are equal:\\
(a) $\kappa(\exp(\p_{0}))$, the image of $\exp(\p_{0})$ under the projection $\kappa$,\\
(b) $\{B\in K: B_{r}>0$, \textrm{for} $ 1\leq r\leq n\}$,\\
(c) $\{B\in K: B_{r}^{'}>0$, \textrm{for} $1\leq r\leq n\}$.
\end{prp}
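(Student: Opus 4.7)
The plan is to establish (a) $\subseteq$ (c), (b) = (c), and (c) $\subseteq$ (a) in turn; the third inclusion is where Lemma 3.1 is needed and where the main work lies.

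For (a) $\subseteq$ (c), I would take $g \in \exp(\p_0)$, write $g = nak$, and set $U = a^{-1}n^{-1}$, which is upper triangular with positive diagonal. From $k = Ug$ and the upper-triangularity of $U$, the bottom-right $r \times r$ block of $Ug$ factors as the product of the bottom-right $r \times r$ blocks of $U$ and of $g$; taking determinants gives $k'_r = U'_r \cdot g'_r > 0$, since $U'_r > 0$ (positive diagonal) and $g'_r > 0$ (every principal minor of a positive definite matrix is positive).

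For (b) = (c), since $k^{-1} = k^T$ for $k \in O(n)$, Jacobi's identity on complementary minors yields $k_s = k'_{n-s}\cdot\det(k)$ for each $s$, together with $k_n = k'_n = \det(k) = \pm 1$. Either of the positivity conditions (b) or (c) forces $\det(k) = +1$ (via $s = n$), after which the two conditions coincide entry by entry.

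For (c) $\subseteq$ (a), I would seek $g \in \exp(\p_0)$ with $\kappa(g) = k$ in the form $g = Uk$ with $U$ upper-triangular unipotent. Such a $g$ automatically satisfies $\kappa(g) = k$ (since $gk^{-1} = U \in NA$), and the factorization above gives $g'_r = U'_r \cdot k'_r = k'_r > 0$; a symmetric matrix with all bottom-right principal minors positive is positive definite (Sylvester's criterion applied to the anti-diagonal conjugate $JgJ$), so $g$ will lie in $\exp(\p_0)$ as soon as it is symmetric. The whole problem therefore reduces to producing an upper-triangular unipotent $U$ with $Uk$ symmetric.

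This is the main obstacle: Lemma 3.1 supplies unipotent factors on the \emph{right} of a matrix (with a \emph{top-left} minor hypothesis), whereas here I need one on the \emph{left} of $k$ (with the \emph{bottom-right} hypothesis $k'_r > 0$). I would bridge the two using the anti-diagonal involution $J$, which simultaneously swaps upper and lower triangularity and swaps top-left with bottom-right minors. Concretely, I apply Lemma 3.1 to $(JkJ)^T$, whose top-left $r \times r$ minors equal $k'_r > 0$, producing an upper-triangular unipotent $C$ with $(JkJ)^TC$ symmetric; conjugation by $J$ converts this into symmetry of $k^T(JCJ)$, and transposing shows that $U := JC^TJ$, which is upper-triangular unipotent, satisfies $Uk = (Uk)^T$, completing the construction.
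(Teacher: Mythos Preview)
Your argument is correct. The overall architecture—a block-triangular computation for one inclusion and Lemma~3.1 for the converse—is the same as the paper's, but the execution differs in two places worth noting.

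First, you link (b) and (c) via Jacobi's complementary-minor identity for orthogonal matrices, whereas the paper simply proves (a)=(b) and (a)=(c) by parallel arguments and never compares (b) and (c) directly. Your route is a clean self-contained observation; the paper's is more repetitive but avoids invoking Jacobi.

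Second, and more substantively, your use of Lemma~3.1 is more elaborate than necessary. You conjugate by the anti-diagonal $J$ to convert the lemma's ``right multiplication, top-left minors'' format into the ``left multiplication, bottom-right minors'' format you want for (c)$\subseteq$(a). The paper instead works on the (b) side and exploits two simple facts: the leading principal minors of $k$ and $k^{t}$ coincide, so Lemma~3.1 applies directly to $k^{t}$, producing an upper-triangular unipotent $b$ with $k^{t}b$ symmetric; and since $k\in K$ one has $(k^{t}b)^{-1}=b^{-1}k$, which is automatically symmetric positive definite and has $\kappa(b^{-1}k)=k$. This transpose-then-invert trick replaces your entire $J$-conjugation passage in one line. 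Your argument is perfectly valid, but if you want the shortest path, apply Lemma~3.1 to $k^{t}$ rather than to $(JkJ)^{T}$.
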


\begin{proof}
We will firstly show that (a) equals (b). For $g\in \exp(\p_{0})$, write $g=nak=bk$ in the Iwasawa decomposition, where $b=na$ is an upper-triangular matrix with each diagonal entry positive.
For each $1\leq r\leq n$, denote
\[
  g^{-1}=(p_{i,j})_{1\leq i,j\leq n}=\left[\begin{array}{cc}
                   g^{-1}_{11}&g^{-1}_{12}\\
                   g^{-1}_{21}&g^{-1}_{22}\\
              \end{array}\right],
\]
where $g^{-1}_{11}=(p_{i,j})_{1\leq i,j\leq r}$, and $g^{-1}_{12}, g^{-1}_{21}, g^{-1}_{22}$ have the obvious meaning. Similar notations apply to the matrices $b^{-1}$ and $k^{t}$ (the transpose of $k$). Write the matrix multiplication $g^{-1}=k^{t}b^{-1}$ in blocks, we have
\[
g^{-1}_{11}=k^{t}_{11}b^{-1}_{11}+k^{t}_{12}b^{-1}_{21}=k^{t}_{11}b^{-1}_{11}.
\]
Taking determinants on both sides and noting that each leading principal minor of a positive definite matrix is positive, we conclude that $k^{t}$ lies in (b). Clearly, then $k$ also lies in (b).

For the converse, let $k\in K$ satisfying the conditions in (b). By Lemma \ref{sym}, there exists an upper-triangular matrix $b$ with each diagonal entry equals one, such that $k^{t}b$ is symmetric. Since $k^{t}b$ also satisfying the conditions in (b), it must be positive definite. Now $b^{-1}k$ lies in $\exp(\p_{0})$ and $\kappa(b^{-1}k)=k$, this finishes the proof that (a) equals (b).

For the proof of (a) equals (c), we use the fact that a real symmetric matrix is positive definite if and only if it satisfies the conditions in (c). The other steps are similar as in the proof of (a) equals (b).
\end{proof}

We end this section with a remark. The above proposition was already observed by Sun \cite[Lemma 3.4]{S2}  in the case of $\SL(2,\R)$.

\vsp
\section{Lowest $K$-types of principal series representations}
\label{sec:lowest}

In this section, we will determine the lowest $K$-types of the principal series representations of $G=\GL_{n}(\R)$.

Firstly, we recall the definition of lowest $K$-types by Vogan. For a general real reductive group $G$, fix a maximal compact subgroup $K$. Let $T_{0}$ be a maximal torus with Lie algebra $\t_{0}$. Denote $\triangle(\k,\t)$ for the corresponding roots, and let $\triangle^{+}$ be a positive system. For a finite dimensional irreducible representation $\mu$ of $K$, define the norm $|\mu|=\la\gamma+2\rho_{c},\gamma+2\rho_{c}\ra^{1/2}$, where $\gamma$ is a highest weight of $\mu$, $\rho_{c}$ is the half sum of the positive $\t$ roots and for the inner product $\la\,,\,\ra$ we choose the one that was incorporated in the definition of a reductive group (\cite[Page 244]{KV}). Let $\pi$ be a continuous representation of $G$, the lowest $K$-types of $\pi$ are defined to be the $K$-types that occur in $\pi$ with minimal norms. For more details, see \cite[Chapter X]{KV}. It should be pointed out that the lowest $K$-types are independent of the various choices of data.

 Now we turn to our $\GL_{n}(\R)$ case, with notations as in the introduction. The principal series representations are the ones induced from minimal parabolic subgroups, explicitly,
\begin{eqnarray*}
  I_{\delta\otimes\nu}&=&\Ind_{MAN}^{G}(\delta\otimes\nu)\\
  &=&\{f\in C^{\infty}(G;\C): f(manx)= \delta(m)\nu(a)\prod_{i=1}^{n}a_{i}^{\frac{n-2i+1}{2}}f(x),\\
 && m\in M, a=\diag(a_{1},...,a_{n})\in A, n\in N, x\in G\},
\end{eqnarray*}
with $G$ acts via right translation, where $\delta$ is a character of $M$, $\nu$ is a character of $A$. It is well known that $I_{\delta\otimes\nu}$ has a real infinitesimal character (in the sense of Vogan) if and only if the image of $\nu$ is contained in the set of positive real numbers, we define such characters to be the real characters of $A$.

 The Weyl group of $A$ is defined to be
 \[
 W:=N_{K}(A)/Z_{K}(A).
 \]
 It acts on $M, \hat{M}, A, \hat{A}$ in an obvious way, where $\hat{M}$ and $\hat{A}$ denote the sets of characters of $M$ and $A$ respectively. Let $S_{n}$ denote the $n$-th symmetric group, we have a natural embedding
\[
\begin{array}{rcl}
 \iota:S_{n} &\hookrightarrow & K,\\
                 \sigma &\mapsto& \Sigma_{i}E_{\sigma(i),i},
\end{array}
\]
where $E_{i,j}$ denotes the usual matrix unit. By a direct computation,
\[
N_{K}(A)=\iota(S_{n})\ltimes Z_{K}(A),
\]
 thus $W\cong S_{n}$.

For $\delta\in\hat{M}$, let $A(\delta)$ denote the lowest $K$-type of $I_{\delta\otimes\nu}$. It does not depend on $\nu$, since
\[
I_{\delta\otimes\nu}|_{K}\cong \Ind_{M}^{K}\delta,
 \]
following from the Iwasawa decomposition of $G$. Our main goal in this section is to determine $A(\delta)$ for each $\delta\in\hat{M}$. Recall that $M=\{\pm 1\}^{n}$, embedded in $K$ in the usual way.
Denote by $\delta_{r}=(\underbrace{1\cdots1}_{r}~0\cdots0)$ the character of $M$, which sends the first $r$'s $-1$ of $M$ to $-1$ and others to $1$. It is enough to consider $A(\delta_{r})$ for
$0\leq r\leq n$, since each $\delta\in\hat{M}$ lies in the Weyl group orbit of some $\delta_{r}$ and $\Ind_{M}^{K}\delta\cong \Ind_{M}^{K}(\sigma.\delta)$  for each $\sigma\in W$ by \cite[Lemma 2.27]{BZ}.

\begin{prp}
\label{lo}
With notations as above, \\
\begin{equation*}
  A(\delta_{r})=\left\{
                     \begin{array}{ll}
                        \bigwedge^{r}\C^{n},\quad&\textrm{if ~ $0\leq r\leq [n/2]$,}\\
                         \det \otimes\bigwedge^{n-r}\C^{n}, \quad&\textrm{if ~ $[n/2]+1\leq r\leq n$.}\smallskip\\

                     \end{array}
              \right.
\end{equation*}
where the $\oO(n)$-action on $\bigwedge^{r}\C^{n}$ is standard. Moreover, $A(\delta_{r})\cong\bigoplus_{\delta\in W.\delta_{r}}\delta$ as representations of $M$.
\end{prp}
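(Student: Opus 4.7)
The plan is to identify $A(\delta_r)$ by three separate checks: that the stated module really occurs in $\Ind_M^K\delta_r$, that it has the claimed $M$-decomposition (which subsumes the ``Moreover'' statement), and that it attains the minimal Vogan norm among all $K$-types occurring in $\Ind_M^K\delta_r$. The first two are combined into a single direct computation; the third is the substantive step.

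I would begin by computing $\bigwedge^r\C^n|_M$ directly. Under the defining action of $K=O(n)$ on $\C^n$, the diagonal subgroup $M$ acts on the standard basis vector $e_i$ by the coordinate character $\chi_i:\diag(\epsilon_1,\dots,\epsilon_n)\mapsto\epsilon_i$. Hence the vectors $e_{i_1}\wedge\cdots\wedge e_{i_r}$ with $i_1<\cdots<i_r$ form a basis of simultaneous $M$-eigenvectors with pairwise distinct weights $\chi_{i_1}\cdots\chi_{i_r}$. As the index set varies over all $r$-subsets of $\{1,\dots,n\}$ these weights exhaust the $W=S_n$-orbit $W.\delta_r$, each occurring with multiplicity one. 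Because $\det|_M=\chi_1\cdots\chi_n$ and $\chi_{\{1,\dots,n\}}\cdot\chi_S=\chi_{S^c}$, the same conclusion holds for $\det\otimes\bigwedge^{n-r}\C^n$. This proves the ``Moreover'' clause, and Frobenius reciprocity then gives
\[
\dim\Hom_K(A(\delta_r),\Ind_M^K\delta_r)=\dim\Hom_M(A(\delta_r)|_M,\delta_r)=1,
\]
so the candidate occurs in the principal series with multiplicity one.

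It remains to verify that $A(\delta_r)$ has minimal Vogan norm among the $K$-types of $\Ind_M^K\delta_r$. By Frobenius reciprocity, any competing $K$-type $\mu$ must satisfy $\dim\Hom_M(\mu|_M,\delta_r)>0$. An $SO(n)$-highest weight of $A(\delta_r)$ is $\gamma=\epsilon_1+\cdots+\epsilon_{\min(r,n-r)}$, where for $r>[n/2]$ one uses the $O(n)$-isomorphism $\bigwedge^r\C^n\cong\det\otimes\bigwedge^{n-r}\C^n$ to pass to the smaller fundamental weight. I would then compute $|\gamma+2\rho_c|$ explicitly from the standard half-sum of positive compact roots (type $D_m$ when $n=2m$, type $B_m$ when $n=2m+1$), and show by a combinatorial analysis that any dominant $\lambda$ whose corresponding $O(n)$-irrep contains $\delta_r$ in its $M$-restriction satisfies $|\lambda+2\rho_c|\ge|\gamma+2\rho_c|$.

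The main obstacle I anticipate is precisely this last step. To make the norm comparison rigorous, one must characterize which dominant $\lambda$ give an $O(n)$-irrep whose $M$-restriction contains $\delta_r$ -- a nontrivial matter since $M$ is not contained in the maximal torus of $SO(n)$ -- and then derive the requisite monotonicity of $|\cdot+2\rho_c|$ on dominant weights. A significantly cleaner route would be to invoke Vogan's general classification of lowest $K$-types for principal series (cf.~\cite[Theorem~4.9]{Vo1}, cited already in the introduction) and specialize it to $\GL_n(\R)$: this should immediately yield the fundamental-weight answer and reduce the whole proof essentially to the direct computation of Step~1 above. I would pursue the classification-based route first in order to bypass the combinatorial estimate.
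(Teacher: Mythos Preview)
Your first two steps---computing $\bigwedge^r\C^n|_M$ via the monomial basis and invoking Frobenius reciprocity---are exactly what the paper does, and your ``Moreover'' argument is essentially identical to theirs.

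For the minimality step, however, the paper takes the direct combinatorial route you flagged as an obstacle, rather than the classification shortcut you prefer. Two simple observations make it tractable. First, writing a competing highest weight as $\lambda=(b_1,\dots,b_m)$ (with $m=[n/2]$), the explicit norm formula $|\phi_\lambda|^2=\sum_i(b_i+2\rho_{c,i})^2$ shows that $|\phi_\lambda|\le|\phi_{\lambda_r}|$ forces $\sum_i b_i\le r$, with equality only for $\lambda=\lambda_r$. Second, to rule out the strict case $\sum b_i<r$, the paper avoids the difficulty you identified ($M$ not sitting in the torus) by embedding
\[
\phi_\lambda \hookrightarrow (\C^n)^{\otimes(b_1-b_2)}\otimes(\textstyle\bigwedge^2\C^n)^{\otimes(b_2-b_3)}\otimes\cdots\otimes(\textstyle\bigwedge^m\C^n)^{\otimes b_m},
\]
whose $M$-weights are visibly products of at most $\sum b_i$ distinct sign characters; hence $\delta_r$ cannot appear. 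The paper runs this separately for $n$ odd and $n$ even because the $\oO(n)$-versus-$\SO(n)$ relationship differs (in particular, $\bigwedge^m\C^{2m}$ is $\SO$-reducible but $\oO$-irreducible, which needs a short extra argument). Your classification-based route would also work and is shorter to state, but the paper's elementary argument is self-contained and not much longer than the setup you already wrote.
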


\begin{proof}
This result may be known to experts. We give a proof here for the lack of reference.
By Frobenius reciprocity law, the $K$-types occurring in $\Ind_{M}^{K}\delta_{r}$ are just the ones whose restriction to $M$ contains the $M$-type $\delta_{r}$. We will start from the $K$-types with small norms and analyze their restriction to $M$. For clarity, we treat the case separately for $n$ even and odd.

\noindent Case a : $n=2m+1, m\geq 0$

In this case, $\oO(n)=\SO(2m+1)\times\{\pm I_{2m+1}\}$ as a direct product. Each irreducible representation of $\SO(2m+1)$ corresponds to two irreducible ones of $\oO(2m+1)$. Choose a maximal torus $T_{0}=\SO(2)^{m}$, embedded in $\SO(2m+1)$ in the usual way. Since $\SO(2)\cong \S^{1}$, the complex number with absolute value 1, one has $\hat{T_{0}}\cong\Z^{m}$. Form the root system $\triangle(\k,\t)$ and choose as in \cite[Chapter IV]{KN}
\[
\triangle^{+}=\{e_{i}\pm e_{j}, i<j\}\cup\{e_{k}\}.
\]
Then the highest weights of irreducible representations of $\SO(2m+1)$ are characterized by a decreasing sequence of $m$ nonnegative integers. For the highest weight $\lambda=(b_{1}\cdot\cdot\cdot b_{m})$, the corresponding representation $\phi_{\lambda}$ has norm
\[
|\phi_{\lambda}|=\sqrt{\Sigma_{i=1}^{m}(2m-2i+1+b_{i})^{2}}.
\]
 Obviously, the norm is small provided that each component $b_{i}$ is small. By \cite[Page 90]{KN}, the representation $\phi_{\lambda_{r}}$ corresponding to
$\lambda_{r}=(1\cdot\cdot\cdot1~0\cdot\cdot\cdot0)$ (with $r$ ones) is $\bigwedge^{r}\C^{2m+1}$, for $0\leq r\leq m$. Let$f_{k}$ denote the basis vector of $\C^{2m+1}$ with $1$ at the $k$-th component and $0$ at other components, then clearly $M$ acts on $\bigwedge^{r}\C^{2m+1}$ with the $1$-dim subspace spanned by $f_{1}\wedge\cdot\cdot\cdot\wedge f_{r}$ invariant, the corresponding character is just $\delta_{r}$ described above. Actually, it follows obviously that $\phi_{\lambda_{r}}\cong\bigoplus_{\delta\in W.\delta_{r}}\delta$ as representations of $M$. Now we claim that $A(\delta_{r})=\phi_{\lambda_{r}}$, for
$0\leq r\leq m$. To see this, it is enough to show that $\delta_{r}$ does not occur in $\phi_{(b_{1}\cdot\cdot\cdot b_{m})}$ or $\det\otimes\phi_{(b_{1}\cdot\cdot\cdot b_{m})}$ for $|\phi_{(b_{1}\cdot\cdot\cdot b_{m})}|\leq|\phi_{\lambda_{r}}|$, unless $(b_{1}\cdot\cdot\cdot b_{m})=\lambda_{r}$. From the formula of norms, it follows that $\Sigma_{i=1}^{m}b_{i}\leq r$. If the sum equals $r$, again by the norm formula, it forces
$(b_{1}\cdot\cdot\cdot b_{m})=\lambda_{r}$. If the sum is strictly less than $r$, by embedding $\phi_{(b_{1}\cdot\cdot\cdot b_{m})}$ into
\[
(\C^{n})^{b_{1}-b_{2}}\otimes(\wedge^{2}\C^{n})^{b_{2}-b_{3}}\otimes\cdot\cdot\cdot\otimes(\wedge^{m}\C^{n})^{b_{m}},
\]
 we see that $\delta_{j}$ occurs in
$\phi_{(b_{1}\cdot\cdot\cdot b_{m})}$ with $j$ at most
\[\Sigma_{i=1}^{m-1}i(b_{i}-b_{i+1})+mb_{m}=\Sigma_{i=1}^{m}b_{i},
\]
 which is strictly less than $r$, finishing the proof of the claim.

For $m+1\leq r\leq n=2m+1$, it is obvious that $\delta_{r}$ occurs in $\det\otimes\phi_{\lambda_{n-r}}$ and $\det\otimes\phi_{\lambda_{n-r}}\cong\bigoplus_{\delta\in W.\delta_{r}}\delta.$
The same argument shows that $A(\delta_{r})=\det\otimes\phi_{\lambda_{n-r}}$, finishing the proof of odd case.\\
Case b : $n=2m, m\geq 1$

In this case, $\oO(n)=\oO(2m)=\SO(2m)\rtimes\{1,r_{n}\}$ as a semidirect product, where $r_{n}=\diag(1,...,1,-1)$. Again we choose the maximal torus $T_{0}=\SO(2)^{m}$. Now the positive roots are
\[
\triangle^{+}=\{e_{i}\pm e_{j}, i<j\}.
\]
The highest weights of irreducible representations of $\SO(2m)$ are characterized by
decreasing sequences of $m$ integers with the condition that the last two has a nonnegative sum. Any irreducible representation of $\oO(n)$ restricted to $\SO(n)$ is either irreducible or is the sum of two irreducible $\SO(n)$-representations. In the first case, the corresponding highest weights ends with 0 and in the second case, the two highest weights differ only in the last component, actually, if one is $\gamma_{m}$ then the other is $-\gamma_{m}$, of course $\gamma_{m}\neq 0$ \cite{Vo1}.

For $\lambda=(b_{1}\cdot\cdot\cdot b_{m})$, the corresponding representation $\phi_{\lambda}$ of $\SO(n)$ has norm
\[
 |\phi_{\lambda}|=\sqrt{\Sigma_{i=1}^{m}(2m-2i+b_{i})^{2}}.
\]
By \cite[Page 90]{KN}, $\SO(n)$ acts on $\bigwedge^{r}\C^{n}$ irreducibly for $0\leq r\leq m-1$ and the corresponding highest weights are $(1\cdot\cdot\cdot1~0\cdot\cdot\cdot0)$ (with $r$ ones). Then $\oO(n)$ also acts irreducibly on $\bigwedge^{r}\C^{n}$. What is different from the odd case is that the action of $\SO(n)$ on $\bigwedge^{m}\C^{n}$ is reducible. Nevertheless, the $\oO(n)$-action is still irreducible. Actually, $\bigwedge^{m}\C^{n} = \phi_{(1\cdot\cdot\cdot1~1)}\oplus\phi_{(1\cdot\cdot\cdot1~-1)}$ as representations of $\SO(n)$. If the $\oO(n)$-action is reducible, say $\bigwedge^{m}\C^{n}=V_{1}\oplus V_{2}$, where $V_{1}, V_{2}\in\widehat{\oO(n)}$. Then $V_{i}$ must be irreducible under the action of $\SO(n)$, so the two highest weights both ends with $0$, a contraction. Now we can proceed as in the odd case to complete the proof.
\end{proof}

For more general results about the lowest $K$-types in the $\GL_{n}$ case, see \cite[Theorem 4.9]{Vo1}. Since $\GL_{n}(\R)$ is quasi-split, the $K$-types $A(\delta_{r})$ is small in the sense of Vogan, and the decomposition in Proposition \ref{lo} is a general property of small $K$-types. See \cite[Theorem 6.4]{Vo} for more details about small $K$-types.

\vsp
\section{Proof of the main theorem}
\label{sec:proof}

Now we can prove the main result of this paper. We restate it here for the convenience of the reader.

\begin{thm}
\label{main}
With the notations as in the introduction, for each real $\nu\in\hat{A}$ and $\delta\in\hat{M}$, the matrix coefficient $\phi_{\delta\otimes\nu,\alpha}(x)$ of the principal series representations $I_{\delta\otimes\nu}$ with respect to the lowest $K$-type $\alpha$ has positive trace as linear operators on $\alpha$, for each $x\in \exp(\p_{0})$.
\end{thm}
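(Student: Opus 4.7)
The plan is to compute the trace $\psi(g) := \Tr\phi_{I_{\delta\otimes\nu},\alpha}(g)$ as an explicit integral over $K$ via Harish-Chandra's Eisenstein integral, then show its integrand is pointwise positive on $\exp(\p_0)$ after a $K$-conjugation trick. Working in the compact picture $I_{\delta\otimes\nu}|_K\cong\Ind_M^K\delta$, pick dual bases $\{u_i\}\subset\alpha$ and $\{v_i\}\subset\alpha^\vee$ under the pairing $\langle u,\tilde u\rangle=\int_K u(k)\tilde u(k)\,dk$, set $K_\alpha(x,y):=\sum_i u_i(x)v_i(y)$, and use $(g\cdot u)(k)=u(kg)=\nu(a(kg))\rho(a(kg))u(\kappa(kg))$, with $\rho(a)=\prod_{i=1}^n a_i^{(n-2i+1)/2}$, to obtain
\[
\psi(g)=\int_K \nu(a(kg))\rho(a(kg))\,K_\alpha(\kappa(kg),k)\,dk.
\]

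By Proposition \ref{lo}, $\alpha$ is isomorphic to $\bigwedge^r\C^n$ (or to $\det\otimes\bigwedge^{n-r}\C^n$, which differs only by signs squaring to one via Jacobi's complementary-minor identity and $\det(k)^2=1$ for $k\in K$). Frobenius reciprocity identifies the natural $M$-weight basis of $\alpha\hookrightarrow\Ind_M^K\delta$ with the minor functions $u_J(k)=\det(k[\{1,\ldots,r\},J])$ for $|J|=r$; Schur orthogonality gives the dual basis $v_J=\binom{n}{r}u_J$; and the Cauchy-Binet formula collapses the kernel to
\[
K_\alpha(\kappa(kg),k)=\binom{n}{r}\det\bigl((\kappa(kg)\,k^t)[\{1,\ldots,r\},\{1,\ldots,r\}]\bigr).
\]
The crux is then the conjugation identity: right-multiplying the Iwasawa decomposition $kg=n(kg)a(kg)\kappa(kg)$ by $k^{-1}$ yields $kgk^{-1}=n(kg)\,a(kg)\,(\kappa(kg)k^{-1})$ with $\kappa(kg)k^{-1}\in K$, so uniqueness of Iwasawa forces $\kappa(kgk^{-1})=\kappa(kg)k^t$ and $a(kgk^{-1})=a(kg)$.

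Setting $g':=kgk^{-1}\in\exp(\p_0)$ (using that $\Ad(K)$ preserves $\p_0$), the trace rewrites as
\[
\psi(g)=\binom{n}{r}\int_K \nu(a(g'))\rho(a(g'))\,\det\bigl(\kappa(g')[\{1,\ldots,r\},\{1,\ldots,r\}]\bigr)\,dk.
\]
For every $k\in K$, reality of $\nu$ and positivity of $a(g')\in A$ give $\nu(a(g'))>0$ and $\rho(a(g'))>0$, while Proposition \ref{mina} gives $\det(\kappa(g')[\{1,\ldots,r\},\{1,\ldots,r\}])>0$ because $\kappa(g')\in\kappa(\exp(\p_0))$ has all leading principal minors positive. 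Hence the integrand is strictly positive on $K$ and $\psi(g)>0$. The main obstacle is spotting the conjugation identity $\kappa(kg)k^t=\kappa(kgk^{-1})$; once found, the Eisenstein-integral and Cauchy-Binet computations are routine, and the theorem reduces exactly to the positive-minor characterisation of $\kappa(\exp(\p_0))$ given by Proposition \ref{mina}.
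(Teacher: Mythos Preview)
Your argument is correct and follows essentially the same route as the paper: both proofs express $\Tr\phi_{\delta\otimes\nu,\alpha}$ via Harish-Chandra's Eisenstein integral, use the conjugation identity $\kappa(kg)k^{-1}=\kappa(kgk^{-1})$ (equivalently the paper's $\kappa(k^{-1}x)k=\kappa(k^{-1}xk)$) to turn the integrand into a principal minor of $\kappa(g')$ with $g'=kgk^{-1}\in\exp(\p_0)$, and then invoke Proposition~\ref{mina}. Your Cauchy--Binet computation is exactly the paper's observation that $\Tr(E_{\delta_r}\circ\pi_\alpha(\kappa(g')))=(\kappa(g'))_r$, written out more explicitly.

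One point you gloss over: your choice $u_J(k)=\det(k[\{1,\dots,r\},J])$ lands in $\Ind_M^K\delta$ only when $\delta=\delta_r$; for a general $\delta$ in the $W$-orbit of $\delta_r$ the rows would be indexed by some other $r$-subset $I$, and Proposition~\ref{mina} says nothing about the $I$-principal minor of $\kappa(g')$. The paper handles this by first reducing to $\delta=\delta_r$ via Theorem~\ref{quote} (equivalence of composition series under $W$) and Proposition~\ref{reduc}. You should insert that reduction explicitly; once it is in place your argument and the paper's coincide. Your Jacobi/complementary-minor remark for the case $r>[n/2]$ is the same content as the paper's appeal to part~(c) of Proposition~\ref{mina} (trailing principal minors).
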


We have written $\phi_{\delta\otimes\nu,\alpha}$ instead of $\phi_{I_{\delta\otimes\nu},\alpha}$ in the above theorem for simplicity. In \cite[Lemma 3.1]{S2}, it was shown that $\phi_{\delta\otimes\nu,\alpha}$ has Hermitian operator values on $\exp(\p_{0})$, and the positive-definite property was proved when $G$ is complex or has real rank one. Our result points out that it must have positive trace in the $\GL_{n}(\R)$ case. To prove Theorem \ref{main}, firstly, we use a general result about principal series representations to reduce the main theorem to the normal case, namely $\delta$ equals $\delta_{r}$ for some $0\leq r\leq n$. Then with the preparations in Section \ref{sec:image} and Section \ref{sec:lowest}, we prove the theorem in the normal case.

\subsection{Reduction to normal case}

We shall use a general result of Harish-Chandra, \cite[Theorem 6.1]{Vo}.

\begin{thm}[Harish-Chandra, Bruhat]
\label{quote}
For $\delta\in\hat{M}$ and $\nu\in \hat{A}$, the representation $I_{\delta\otimes\nu}$ is admissible. The representations $I_{\delta\otimes\nu}$ and $I_{\delta^{'}\otimes\nu^{'}}$ have equivalent composition series if and only if $(\delta^{'}, \nu^{'})=(\sigma.\delta, \sigma.\nu)$ for some $\sigma\in W$.
\end{thm}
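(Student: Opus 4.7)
The plan is to treat the two assertions independently.

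\textbf{Admissibility.} By the Iwasawa decomposition $G = NAK$, restriction to $K$ gives $I_{\delta\otimes\nu}|_K \cong \Ind_M^K\delta$ (the isomorphism already used in Section~\ref{sec:lowest}). Since $M$ is finite and $K$ compact, Frobenius reciprocity yields, for any irreducible $\mu \in \hat K$,
\[
  [\mu : I_{\delta\otimes\nu}] \;=\; \dim\Hom_M(\mu|_M,\delta) \;\leq\; \dim\mu \;<\; \infty,
\]
which is admissibility.

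\textbf{Composition series equivalence.} The strategy is to pass to distribution characters: two Harish-Chandra modules have equivalent composition series if and only if their global characters coincide, by linear independence of the characters of distinct irreducible admissible representations. So it suffices to show $\Theta_{I_{\delta\otimes\nu}} = \Theta_{I_{\delta'\otimes\nu'}}$ if and only if $(\delta',\nu') = \sigma.(\delta,\nu)$ for some $\sigma\in W$. For this, I would invoke Harish-Chandra's explicit formula for the character of a principal series: on the regular set of the split Cartan $MA$,
\[
  \Theta_{I_{\delta\otimes\nu}}(ma) \;=\; \frac{\sum_{w \in W}(w.\delta)(m)\,(w.\nu)(a)\,a^{\rho}}{|\det(\Ad(ma)|_{\n}-1)|},
\]
derived from the Weyl integration formula. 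The numerator is manifestly invariant under $(\delta,\nu)\mapsto \sigma.(\delta,\nu)$, giving the ``if'' direction.

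For the ``only if'' direction, equality of the two characters forces the two Weyl sums in the numerator to coincide on $(MA)^{\mathrm{reg}}$, hence on all of $MA$ by continuity. Linear independence of distinct characters of the abelian group $MA$ then forces $\{w.(\delta,\nu) : w\in W\} = \{w.(\delta',\nu') : w\in W\}$ as multisets, so $(\delta',\nu') = \sigma.(\delta,\nu)$ for some $\sigma\in W$.

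\textbf{Main obstacle.} The substantive input is Harish-Chandra's character formula for induced representations, which I would quote from the general theory rather than rederive; its proof rests on the Weyl integration formula and an analysis of the fixed points of translation by a regular $ma$ on $G/MAN$. Granted that formula, both parts reduce to Frobenius reciprocity and linear independence of characters of the abelian group $MA$.
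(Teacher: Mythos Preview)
The paper does not prove this theorem; it is quoted as a known result of Harish--Chandra and Bruhat, with a pointer to \cite[Theorem 6.1]{Vo}, and used as a black box in the reduction step of Section~\ref{sec:proof}. So there is no ``paper's own proof'' to compare against.

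Your sketch is the standard route and is essentially sound: admissibility via Frobenius reciprocity on $K$, and the composition-series statement via Harish--Chandra's induced character formula together with linear independence of irreducible characters. One small correction: in the character formula for the \emph{normalized} induction $I_{\delta\otimes\nu}$, the $a^{\rho}$ should not appear in the numerator. With normalized induction the numerator is simply $\sum_{w\in W}(w.\delta)(m)\,(w.\nu)(a)$; the $\rho$-shift has already been absorbed. As written, your numerator is not $W$-invariant (since $\rho$ is not), which would spoil the ``if'' direction. Once that factor is removed, the argument goes through: equality of global characters forces equality of the Weyl-symmetrized sums on $(MA)^{\mathrm{reg}}$, and linear independence of characters of the abelian group $MA$ forces the $W$-orbits of $(\delta,\nu)$ and $(\delta',\nu')$ to coincide. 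You might also remark, for completeness, that the induced character is supported on conjugates of $MA$, so agreement on $(MA)^{\mathrm{reg}}$ already captures the full distribution; this justifies reading off the conclusion from that single Cartan.
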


Theorem \ref{quote} holds in a more general setting, but we need it only in the $\GL_{n}(\R)$ case. Suppose $\alpha$ is the lowest $K$-type of $I_{\delta\otimes\nu}$, since
\[
I_{\delta\otimes\nu}|_{K}\cong \Ind_{M}^{K}\delta\cong \Ind_{M}^{K}(\sigma.\delta)\cong I_{\sigma.\delta\otimes\sigma.\nu}|_{K}
\]
as representations of $K$, $\alpha$ is also the lowest $K$-type of $I_{\sigma.\delta\otimes\sigma.\nu}$, for each $\sigma\in W$. By Theorem \ref{quote} and Proposition \ref{reduc}, we conclude that
\[
\phi_{\delta\otimes\nu,\alpha}=\phi_{\sigma.\delta\otimes\sigma.\nu,\alpha}.
\]
Since $W\backslash\hat{M}=\{\delta_{r}:0\leq r\leq n\}$ and $W$ leaves the real characters of $A$ invariant, it is enough to prove Theorem \ref{main} for $\delta$ equals some $\delta_{r}$.\\

\subsection{Proof of the normal case}

Since $\alpha$ occurs in $I_{\delta_{r}\otimes\nu}$ with multiplicity one, the matrix coefficient $\phi_{\delta_{r}\otimes\nu,\alpha}$ can be expressed as Harish-Chandra's Eisenstein integral:
\begin{displaymath}
\label{integral}
\phi_{\delta_{r}\otimes\nu,\alpha}(x)=\frac{\deg(\alpha)}{\deg(\delta_{r})}\int_{K}\nu(H(k^{-1}x))H(k^{-1}x)^{\rho}\pi_{\alpha}(k)\circ E_{\delta_{r}}\circ\pi_{\alpha}(\kappa(k^{-1}x))\od\!k,
\end{displaymath}
where $H: G\rightarrow A$ is the projection through the Iwasawa decomposition $G=NAK$,
\[
a^{\rho}:=\prod_{i=1}^{n}a_{i}^{\frac{n-2i+1}{2}}
\]
provided $a=\diag(a_{1},...,a_{n})\in A$, $\pi_{\alpha}$ denotes the $K$-action on $\alpha$, $E_{\delta_{r}}$ denotes the orthogonal projection to the $\delta_{r}$ component of $\alpha$ (we know that $\delta_{r}$ occurs in $\alpha$ with multiplicity one) and $\od\!k$ is the normalized Haar measure on the compact group $K$. Consider the trace, we have
\begin{eqnarray*}
  &&\Tr(\phi_{\delta_{r}\otimes\nu,\alpha}(x))\\
  &=&\frac{\deg(\alpha)}{\deg(\delta_{r})}\int_{K}\nu(H(k^{-1}x))H(k^{-1}x)^{\rho}\Tr(\pi_{\alpha}(k)\circ E_{\delta_{r}}\circ\pi_{\alpha}(\kappa(k^{-1}x)))\od\!k\\
  &=&\frac{\deg(\alpha)}{\deg(\delta_{r})}\int_{K}\nu(H(k^{-1}x))H(k^{-1}x)^{\rho}\Tr(E_{\delta_{r}}\circ\pi_{\alpha}(\kappa(k^{-1}x))\circ\pi_{\alpha}(k))\od\!k\\
  &=&\frac{\deg(\alpha)}{\deg(\delta_{r})}\int_{K}\nu(H(k^{-1}x))H(k^{-1}x)^{\rho}\Tr(E_{\delta_{r}}\circ\pi_{\alpha}(\kappa(k^{-1}x k)))\od\!k
\end{eqnarray*}
Since $\nu$ is real, $\nu(H(k^{-1}x))H(k^{-1}x)^{\rho}>0$. Theorem \ref{main} follows if we show that $\Tr(E_{\delta_{r}}\circ\pi_{\alpha}(\kappa(k^{-1}xk)))>0$ for each $k\in K, x\in \exp(\p_{0})$. Using Proposition \ref{lo}, we can calculate the trace explicitly.

For $0\leq r\leq [n/2]$, we take $\alpha=\bigwedge^{r}\C^{n}$. If $r=0$, the trace is trivially positive. For $1\leq r\leq [n/2]$, using the basis $\{e_{i_{1}}\wedge\cdots\wedge e_{i_{r}}|\{i_{1}\cdots i_{r}\}\in\{1~2\cdots n\}\}$ of $\bigwedge^{r}\C^{n}$,
\[
\Tr(E_{\delta_{r}}\circ\pi_{\alpha}(\kappa(k^{-1}xk)))=(\kappa(k^{-1}xk))_{r},
\]
where the notation is as in Proposition \ref{mina}. Note that $x\in \exp(\p_{0})$ implies that $k^{-1}xk\in \exp(\p_{0})$ for each $k\in K$, thus by Proposition \ref{mina}(b),
\[
\Tr(E_{\delta_{r}}\circ\pi_{\alpha}(\kappa(k^{-1}xk)))>0.
\]

For $[n/2]+1\leq r\leq n$, we take $\alpha=\det\otimes\bigwedge^{n-r}\C^{n}$. If $r=n$, the positivity of the trace follows from the fact that $\det(\kappa(k^{-1}xk))>0$. For $[n/2]+1\leq r< n$, using the basis $\{e_{i_{1}}\wedge\cdots\wedge e_{i_{n-r}}|\{i_{1}\cdots i_{n-r}\}\in\{1~2\cdots n\}\}$ of $\det\otimes\bigwedge^{n-r}\C^{n}$,
\[
\Tr(E_{\delta_{r}}\circ\pi_{\alpha}(\kappa(k^{-1}xk)))=(\kappa(k^{-1}xk))^{'}_{n-r}>0,
\]
by Proposition \ref{mina}(c). Now the proof of Theorem \ref{main} is completed.


\begin{thebibliography}{10}



\bibitem{BK}
J. Bernstein and B. Krotz, \emph{Smooth Frechet globalizations of Harish-Chandra modules.} Israel J. Math. 199(2014), 45-111.

\bibitem{BZ}
I.N. Bernstein and A.V. Zelevinskii, \emph{Representations of the group $GL(n,\F)$, where $\F$ is a non-archimedean local field.} Russian Math. Surveys 31(1976), 1-68.

\bibitem{Ca}
W. Casselman, \emph{Canoncial extensions of Harish-Chandra modules to representations of $G$.} Canad. J. Math. 41(1989), 385-438.

\bibitem{HL}
M. Harris and J.S. Li, \emph{A Lefschetz property for subvarieties of Shimura varieties.} J. Algebraic Geom. 7(1998), 77-122.

\bibitem{KN}
A. Knapp, \emph{Representations of semisimple groups, an overview based on examples.} Princeton University Press, Princeton, 1986.

\bibitem{KV}
A. Knapp and D. Vogan, \emph{Cohomological induction and unitary representations.} Princeton University Press, Princeton, 1995.

\bibitem{Li}
J.S. Li, \emph{Theta liftings for unitary represnetations with non-zero cohomology.} Duke Math. J. 61(1990), 913-937.

\bibitem{S1}
B. Sun, \emph{Matrix coefficients of cohomologically induced representations.} Compos. Math. 143(2007), 201-221.

\bibitem{S2}
B. Sun, \emph{Positivity of matrix coefficients of represnetations with real infinitesimal characters.} Israel J. Math. 170(2009), 395-410.

\bibitem{S3}
B. Sun, \emph{The non-vanishing hypothesis at infinity for Rankin-Selberg convolutions.} J. Amer. Math. Soc. 30(2017), 1-25.

\bibitem{TH}
H. Takahiro, K. Harutaka, M. Tadashi and O. Takayuki, \emph{Matrix coefficients of discrete series representations of SU(3,1).} J. Lie Theory 25(2015), 271-306.

\bibitem{Vo}
D. Vogan, \emph{The algebraic structure of the representations of semisimple Lie groups $I$.} Ann. of Math.(2) 109(1979), 1-60.

\bibitem{Vo1}
D. Vogan, \emph{The unitary dual of GL(n) over an archimedean field.} Invent. Math. 83(1986), 449-505.

\bibitem{Wa}
Nolan R. Wallach, \emph{Real reductive groups \uppercase\expandafter{\romannumeral2}.} Academic Press, Pure and Applied Mathmatics, 1992.

\end{thebibliography}
\end{document}